\def\Th1{\varTheta}
\begin{document}

\newtheorem{theorem}{Theorem}
\newtheorem{lemma}[theorem]{Lemma}
\newtheorem{claim}[theorem]{Claim}
\newtheorem{cor}[theorem]{Corollary}
\newtheorem{conj}[theorem]{Conjecture}
\newtheorem{prop}[theorem]{Proposition}
\newtheorem{definition}[theorem]{Definition}
\newtheorem{question}[theorem]{Question}
\newtheorem{example}[theorem]{Example}
\newcommand{\hh}{{{\mathrm h}}}
\newtheorem{remark}[theorem]{Remark}

\numberwithin{equation}{section}
\numberwithin{theorem}{section}
\numberwithin{table}{section}
\numberwithin{figure}{section}

\def\sssum{\mathop{\sum\!\sum\!\sum}}
\def\ssum{\mathop{\sum\ldots \sum}}
\def\iint{\mathop{\int\ldots \int}}

\newcommand{\diam}{\operatorname{diam}}

\def\squareforqed{\hbox{\rlap{$\sqcap$}$\sqcup$}}
\def\qed{\ifmmode\squareforqed\else{\unskip\nobreak\hfil
\penalty50\hskip1em \nobreak\hfil\squareforqed
\parfillskip=0pt\finalhyphendemerits=0\endgraf}\fi}

\newfont{\teneufm}{eufm10}
\newfont{\seveneufm}{eufm7}
\newfont{\fiveeufm}{eufm5}
%
%
\newfam\eufmfam
     \textfont\eufmfam=\teneufm
\scriptfont\eufmfam=\seveneufm
     \scriptscriptfont\eufmfam=\fiveeufm
%
%
\def\frak#1{{\fam\eufmfam\relax#1}}

\newcommand{\bflambda}{{\boldsymbol{\lambda}}}
\newcommand{\bfmu}{{\boldsymbol{\mu}}}
\newcommand{\bfxi}{{\boldsymbol{\eta}}}
\newcommand{\bfrho}{{\boldsymbol{\rho}}}

\def\eps{\varepsilon}

\def\fK{\mathfrak K}
\def\fT{\mathfrak{T}}
\def\fL{\mathfrak L}
\def\fR{\mathfrak R}

\def\fA{{\mathfrak A}}
\def\fB{{\mathfrak B}}
\def\fC{{\mathfrak C}}
\def\fM{{\mathfrak M}}
\def\fS{{\mathfrak  S}}
\def\fU{{\mathfrak U}}
\def\fW{{\mathfrak W}}

\def\D{\Delta}
\def\E{\mathsf{E}}
\def\G{\Gamma}
\def\Gr{{\mathbf G}}
\def\a{\alpha}
\def\d{\delta}
\def\rk{\mathrm{rk}}
 
\def\vA{\mathbf A}
\def\vB{\mathbf B}

\def\vX{\mathbf X}
\def\vY{\mathbf Y}
 
\def\T {\mathsf {T}}
\def\Tor{\mathsf{T}_d}
\def\Tore{\widetilde{\mathrm{T}}_{d} }

\def\sM {\mathsf {M}}

\def\ss{\mathsf {s}}

\def\Kmnd{\cK_d(m,n)}
\def\Kmnp{\cK_p(m,n)}
\def\Kmnq{\cK_q(m,n)}

\def \balpha{\bm{\alpha}}
\def \bbeta{\bm{\beta}}
\def \bgamma{\bm{\gamma}}
\def \bdelta{\bm{\delta}}
\def \bzeta{\bm{\zeta}}
\def \blambda{\bm{\lambda}}
\def \bchi{\bm{\chi}}
\def \bphi{\bm{\varphi}}
\def \bpsi{\bm{\psi}}
\def \bnu{\bm{\nu}}
\def \bomega{\bm{\omega}}

\def \bell{\bm{\ell}}

\def\eqref#1{(\ref{#1})}

\def\vec#1{\mathbf{#1}}

\newcommand{\abs}[1]{\left| #1 \right|}

\def\Zq{\mathbb{Z}_q}
\def\Zqx{\mathbb{Z}_q^*}
\def\Zd{\mathbb{Z}_d}
\def\Zdx{\mathbb{Z}_d^*}
\def\Zf{\mathbb{Z}_f}
\def\Zfx{\mathbb{Z}_f^*}
\def\Zp{\mathbb{Z}_p}
\def\Zpx{\mathbb{Z}_p^*}
\def\cM{\mathcal M}
\def\cE{\mathcal E}
\def\cH{\mathcal H}

\def\le{\leqslant}

\def\ge{\geqslant}

\def\sfB{\mathsf {B}}
\def\sfC{\mathsf {C}}
\def\sfS{\mathsf {S}}
\def\L{\mathsf {L}}
\def\FF{\mathsf {F}}

\def\sE {\mathscr{E}}
\def\sS {\mathscr{S}}
\def\sL {\mathscr{L}}

\def\cA{{\mathcal A}}
\def\cB{{\mathcal B}}
\def\cC{{\mathcal C}}
\def\cD{{\mathcal D}}
\def\cE{{\mathcal E}}
\def\cF{{\mathcal F}}
\def\cG{{\mathcal G}}
\def\cH{{\mathcal H}}
\def\cI{{\mathcal I}}
\def\cJ{{\mathcal J}}
\def\cK{{\mathcal K}}
\def\cL{{\mathcal L}}
\def\cM{{\mathcal M}}
\def\cN{{\mathcal N}}
\def\cO{{\mathcal O}}
\def\cP{{\mathcal P}}
\def\cQ{{\mathcal Q}}
\def\cR{{\mathcal R}}
\def\cS{{\mathcal S}}
\def\cT{{\mathcal T}}
\def\cU{{\mathcal U}}
\def\cV{{\mathcal V}}
\def\cW{{\mathcal W}}
\def\cX{{\mathcal X}}
\def\cY{{\mathcal Y}}
\def\cZ{{\mathcal Z}}
\newcommand{\rmod}[1]{\: \mbox{mod} \: #1}

\def\cg{{\mathcal g}}

\def\vy{\mathbf y}
\def\vr{\mathbf r}
\def\vx{\mathbf x}
\def\va{\mathbf a}
\def\vb{\mathbf b}
\def\vc{\mathbf c}
\def\ve{\mathbf e}
\def\vh{\mathbf h}
\def\vk{\mathbf k}
\def\vm{\mathbf m}
\def\vz{\mathbf z}
\def\vu{\mathbf u}
\def\vv{\mathbf v}

\def\e{{\mathbf{\,e}}}
\def\ep{{\mathbf{\,e}}_p}
\def\eq{{\mathbf{\,e}}_q}

\def\Tr{{\mathrm{Tr}}}
\def\Nm{{\mathrm{Nm}}}

 \def\SS{{\mathbf{S}}}

\def\lcm{{\mathrm{lcm}}}

 \def\0{{\mathbf{0}}}

\def\({\left(}
\def\){\right)}
\def\l|{\left|}
\def\r|{\right|}
\def\fl#1{\left\lfloor#1\right\rfloor}
\def\rf#1{\left\lceil#1\right\rceil}
\def\fl#1{\left\lfloor#1\right\rfloor}
\def\ni#1{\left\lfloor#1\right\rceil}
\def\sumstar#1{\mathop{\sum\vphantom|^{\!\!*}\,}_{#1}}

\def\mand{\qquad \mbox{and} \qquad}

\def\tblue#1{\begin{color}{blue}{{#1}}\end{color}}

 \def\0{{\mathbf{0}}}
  \newcommand{\SL}{\operatorname{SL}}
 \newcommand{\GL}{\operatorname{GL}}
\newcommand{\ord}{\operatorname{ord}}
\newcommand{\Span}{\operatorname{Span}}

\newcommand{\Adj}{\operatorname{Adj}}




\hyphenation{re-pub-lished}

\mathsurround=1pt

\def\bfdefault{b}

\def \F{{\mathbb F}}
\def \K{{\mathbb K}}
\def \N{{\mathbb N}}
\def \Z{{\mathbb Z}}
\def \P{{\mathbb P}}
\def \Q{{\mathbb Q}}
\def \R{{\mathbb R}}
\def \C{{\mathbb C}}
\def\Fp{\F_p}
\def \fp{\Fp^*}

 \def \xbar{\overline x}

\title[Matrices with elements from arbitrary sets]{On the determinants of matrices with elements from arbitrary sets
}

 \author[I. D. Shkredov]{Ilya D. Shkredov}
\email{ilya.shkredov@gmail.com}
 
 \author[I.~E.~Shparlinski]{Igor E. Shparlinski} 
 \email{igor.shparlinski@unsw.edu.au}

\begin{abstract} Recently there has been several works estimating the number of
$n\times n$ matrices with elements from some finite sets $\cX$ of arithmetic interest  
and of a given determinant. Typically such results are compared 
with the trivial upper bound $O(X^{n^2-1})$, where $X$ is the cardinality of 
$\cX$. Here we show that even for arbitrary sets $\cX\subseteq \R$, 
some recent results from additive combinatorics enable us to obtain a stronger bound with a power saving. 
  \end{abstract}

\keywords{Matrices, determinants}
\subjclass[2010]{11C20, 15B36, 15B52}

\maketitle

\tableofcontents

\section{Introduction}

\subsection{Set-up and motivation}

Given a finite set $\cX$ in a  field $K$ and an element $d \in K$, 
we denote by $\cD_n(\cX,d)$ the set of $n \times n$ matrices 
\begin{equation}
\label{eq: Matr X}
\vX=\(x_{i,j}\)_{1\le i,  j\le n}\in \cX^{n\times n}
\end{equation}
such that
\begin{equation}\label{eq: basic det eq}
    \det \vX = d \,, 
\end{equation}
The case of $d=0$, corresponding to singular matrices,  is especially interesting.

We recall that matrices with prescribed determinants and entries from various structural sets have been 
studied a large number of works. In particular, the case of the set $\cX =\{0, 1\}$ and $d=0$ (and the 
dimension $n \to \infty$) has attracted 
a lot of attention, 
see~\cite[Section~7.5]{TaoVu} as well as more recent works~\cite{BVW, CJMS, Tikh, Vu} 
and references therein, 
however many natural questions, such as~\cite[Conjecture~7.24]{TaoVu}, 
remain widely open still. The number of matrices with integer entries from an 
interval $[-H,H]$, $H \to \infty$, (and of fixed dimension $n$) and with a prescribed determinant $d$, 
have been evaluated asymptotically by Duke,  Rudnick and  Sarnak~\cite{DRS},
if $d \ne 0$ is fixed, and  Katznelson~\cite{Kat} when $d=0$ (see also~\cite{BlLu}). 
A uniform (with respect to $d$) 
upper bound on this quantity is given in~\cite{Shp}. 

Motivated by a various arithmetic applications, similar questions have also been studied for matrices with polynomial entries (evaluated at 
integers from $[-H,H]$), see ~\cite{BlLi, KSX, MOS}, while matrices with rational entries have been 
investigated in~\cite{AKOS}. 

To be more precise, we denote by$D_n(\cX,d) = \# \cD_n(\cX,d)$  the number of matrices in $\cD_n(\cX,d)$. 

We recall that the quality of the  bounds from the aforementioned works, 
for special sets $\cX$ of some prescribed arithmetic structure, is usually 
compared with the trivial bound
\begin{equation}
\label{eq: Det Triv}
D_n(\cX,d) = O\(X^{n^2-1}\)\,,
\end{equation}
where  $X = \# \cX$ is the cardinality of $\cX$ and the implied constant depends only on $n$.

It is also interesting to recall that it has been  shown in~\cite[Corollary~3]{Arut} that the number of different determinants $\det \vX$
generated by all  matrices~\eqref{eq: Matr X},  is at least $0.125 X^{0.1 \log n}$, see also~\cite{KPSV, Vinh1, Vinh2}.
 We note that this is established for sets $\cX$ in finite fields (and thus immediately implies this result 
for sets $\cX\subseteq \Q$).  However it is easy to see that  the argument also  works  for sets $\cX \subseteq \R$. 
The proof reduces the problem to some questions of the multiple sum--product phenomenon, see~\cite{TaoVu},  
and this logarithmical growth is a well-known barrier in this area.

\subsection{Main result}

We first observe that in any field $K$,  for $n = 2$ and $d = 0$, the example of the set 
$\cX= \{g^t:~t=1, \ldots, X\}$, for any $g \in K^*$,  shows that~\eqref{eq: Det Triv} is 
tight in this case. 

It is also known that if $K$ is
a finite field $\F_p$ of $p$ elements for a prime $p$ that~\eqref{eq: Det Triv} is tight for $\cX = \F_p$ and any $n$, see~\cite{BreMcK} for 
very general results about distribution of determinants in residue rings.

However, here we show that when $n \ge 3$  
then~\eqref{eq: Det Triv} can be improved for an arbitrary  finite set  $\cX \subseteq \R$.    

Generally, we are interested in bounds which are uniform with respect to $d$, however the case of $d=0$ is
somewhat special and sometimes is more difficult to treat. Hence, we define
\[
D_n^*(\cX) = \sup_{d \in \R^*}  D_n(\cX,d)\mand D_n(\cX) = \sup_{d \in \R}  D_n(\cX,d)\,.
\]

\begin{theorem}\label{thm: det=d}
Let $\cX\subseteq \R$ be an arbitrary set of cardinality   $\# \cX = X$. Then, 
there exists  some absolute constant $\eta>0$ such that 
\begin{itemize}
\item for $n = 3$,   we have 
\[
D_3^*(\cX)   \ll     X^{59/8  -\eta}   \mand  D_3(\cX)  \ll     X^{15/2} \,,
\]
\item for $n \ge 4$, we have 
\[
D_n^*(\cX)  \ll     X^{n^2 - 13/8  -\eta} \mand
D_n(\cX)  \ll   X^{n^2-2+2/n} \,.
\]  
\end{itemize}
\end{theorem}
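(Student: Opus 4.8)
The plan is to exploit the multilinearity of $\det$ by singling out two rows and reducing to a bilinear, and ultimately incidence-theoretic, count. Write the rows of $\vX$ as $\vr_1,\dots,\vr_n\in\cX^{n}$. Fixing $\vr_3,\dots,\vr_n$ and expanding by Laplace along the first two rows,
\[
\det\vX=\sum_{1\le j<k\le n}(-1)^{j+k+1}\,p_{j,k}\,q_{j,k},
\]
where $p_{j,k}=x_{1,j}x_{2,k}-x_{1,k}x_{2,j}$ is the $2\times 2$ minor of the first two rows on columns $\{j,k\}$ and $q_{j,k}$ is the complementary $(n-2)\times(n-2)$ minor of rows $\vr_3,\dots,\vr_n$. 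Equivalently $\det\vX=\vr_1^{\top}M\vr_2$ with $M=M(\vr_3,\dots,\vr_n)$ skew-symmetric, of even rank $2s$ with $1\le s\le\lfloor n/2\rfloor$, unless $M=0$ (which happens exactly when $\operatorname{rank}[\vr_3,\dots,\vr_n]\le n-3$). First I would split the outer sum over $(\vr_3,\dots,\vr_n)$ according to $\operatorname{rank}M$.

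In the main stratum I would take a rank factorisation $M=\sum_{i=1}^{s}(\vu_i\vv_i^{\top}-\vv_i\vu_i^{\top})$, which turns $\det\vX=d$ into
\[
\sum_{i=1}^{s}\big((\vu_i\cdot\vr_1)(\vv_i\cdot\vr_2)-(\vv_i\cdot\vr_1)(\vu_i\cdot\vr_2)\big)=d,
\]
a bilinear relation between the image points $P(\vr_1),P(\vr_2)\in\R^{2s}$, where $P(\vr)=\big((\vu_i\cdot\vr,\vv_i\cdot\vr)\big)_{i}$ runs through a multi-dilate set $\cX w_1+\dots+\cX w_n$ of at most $X^{n}$ points. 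For $n=3$ (so $s=1$) this is exactly a count of pairs of planar points incident to a prescribed affine line $\ell(\vr_2)$, which passes through the origin precisely when $d=0$. I would bound it by the Szemer\'edi--Trotter theorem applied to the $\le X^{3}$ points $P(\vr)$ and the $\le X^{3}$ lines $\ell(\vr)$, after removing points and lines of large multiplicity (these occur only when a coordinate projection collapses many points of $\cX^{3}$, i.e. when $\cX$ carries a long arithmetic-progression-type substructure aligned with $\vr_3$). For $n\ge4$ the higher-rank strata reduce to the same planar count combined with the trivial estimate on the complementary blocks, and optimising the split between ``rich'' and ``poor'' tuples $(\vr_3,\dots,\vr_n)$ produces the exponents $15/2$ and $n^{2}-2+2/n$.

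For the stronger bounds $D_n^{*}(\cX)\ll X^{\cdots-\eta}$, valid only for $d\ne0$: then $\ell(\vr_2)$ (or its higher-rank analogue) avoids the origin, so the relevant incidence problem is genuinely non-degenerate, and I would upgrade the bare Szemer\'edi--Trotter input to one of the recent power-saving estimates from additive combinatorics for the number of solutions of a non-degenerate equation of the shape $a_1b_1-a_2b_2=\lambda$ with $\lambda\ne0$ and the variables ranging over linear images of an arbitrary finite set $\cX\subseteq\R$ (and its rank-$s$ generalisation). Such a bound beats the Szemer\'edi--Trotter exponent by a fixed power of $X$, and carrying this through the stratification yields the claimed $\eta>0$. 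No comparable gain exists when $d=0$: already for a geometric progression $\cX$ the homogeneous bilinear count is of essentially the trivial size (this is why $n=2$ cannot be improved at all), which is exactly why only the clean exponents $15/2$ and $n^{2}-2+2/n$ are asserted in that case.

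The main obstacle, absorbing most of the work, is the control of the degenerate contributions, which come in two flavours. First there are the low-rank strata of $M$, i.e. the tuples $\vr_3,\dots,\vr_n$ that are themselves linearly degenerate; these I would handle by a rank/dimension stratification that reduces them to determinant counts for smaller matrices, ultimately to $D_{n-1}(\cX)$ and $D_{n-2}(\cX)$. Second, within a fixed stratum, there are the configurations in which the linear maps $P$ collapse many rows to a single image point -- equivalently, those in which $\cX$ carries long arithmetic-progression-type structure in the directions prescribed by the deleted rows. Balancing the incidence (respectively additive-combinatorial) estimate against the number of such ``bad'' tuples without forfeiting the power saving is the crux, and it is precisely this optimisation that pins down the constants $59/8$, $13/8$ and $2/n$.
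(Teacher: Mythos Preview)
Your two-row Laplace reduction $\det\vX=\vr_1^{\top}M\vr_2$ is a legitimate and genuinely different starting point from the paper's, but the proposal has structural gaps that prevent it from reaching the stated exponents.

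First, a small but telling slip: since $M\vr_j=0$ for every $j\ge 3$ (repeated rows kill the determinant), the skew-symmetric $M$ always has rank $0$ or $2$. There are no higher-rank strata, so for every $n$ your problem collapses to the same planar equation $\alpha\delta-\beta\gamma=d$ with $(\alpha,\beta)=P(\vr_1)$, $(\gamma,\delta)=P(\vr_2)$ and $\ker P=\operatorname{span}(\vr_3,\dots,\vr_n)$. This means the leverage for large $n$ must come entirely from controlling the fibres of the linear projection $P:\cX^{n}\to\R^{2}$, and that control is precisely what your sketch waves away as ``removing points and lines of large multiplicity''.

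Second, and more seriously, for $d=0$ the lines $\ell(\vr_2)$ are \emph{all concurrent} at the origin. The weighted incidence count then equals $\sum_{\theta}m(\theta)^{2}$, where $m(\theta)$ is the number of $\vr\in\cX^{n}$ with $P(\vr)$ in direction $\theta$; this is an $L^{2}$-energy of a pushforward, not a Szemer\'edi--Trotter configuration, and Szemer\'edi--Trotter gives nothing here. You would need a separate argument bounding $m(\theta)$ (equivalently, $\#(\cX^{n}\cap\Pi)$ for $2$-planes $\Pi\supset\ker P$), uniformly over $\vr_3,\dots,\vr_n$ and $\theta$, and that is the whole difficulty. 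The paper avoids this trap for $n=3$, $d=0$ by fixing one entry from each row, rescaling to put all variables in a single set $\cU$ of size $O(X)$, and then interpreting the remaining equation as incidences between points of $\cU^{3}$ and a family of genuine (non-concurrent) quadric curves in $\R^{3}$, to which Zahl's point--surface bound applies and yields $X^{15/2}$.

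For $d\ne 0$ and $n=3$ the paper's route is also different: it expands along a \emph{single} row, obtaining point--plane incidences in $\R^{3}$ with weighted planes; the dyadic weight $w$ is controlled by bounding an auxiliary energy $\E_{*}$, and it is here that the Rudnev--Shkredov input (your ``power-saving estimate'') enters, via the bound $T(\cU)\ll U^{13/2-\vartheta}$, producing the constant $59/8$. For $n\ge 4$ the paper does not iterate the planar picture at all: the $D_n^{*}$ bound comes from a block decomposition and Gauss elimination that reduces to the $3\times 3$ case with a shifted set, and the $D_n$ bound comes from the Schur determinant identity $\det\vX=x\det\vY-\vz\,\Adj(\vY)\,\vy^{t}$ together with a point--hyperplane incidence estimate in $\R^{n-1}$. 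The exponent $2/n$ is exactly the Szemer\'edi--Trotter-type exponent $(n-1)/n$ from that $(n-1)$-dimensional incidence lemma; it does not fall out of any two-dimensional optimisation.
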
  

We note that the constant $\eta$ in Theorem~\ref{thm: det=d} originates 
from a result of Rudnev and 
Shkredov~\cite[Theorem~9]{RuSh} (see Lemma~\ref{lem:NX-R} below) and can be explicitly  evaluated. 
However, without an exact knowledge of $\eta$ the bound on $D_n^*(\cX)$ in Theorem~\ref{thm: det=d} 
is meaningful only for $n = 4$ and $n=5$, otherwise the bound on $D_n(\cX) $ is better.

We observe that the dependence on $n\ge 4$ in the bound on $D_n(\cX)$ resembles the 
well--known result of Pila~\cite{Pila} on the number of integer points on a variety, which 
has been improved in a series of work~\cite{BrHBSa,Salb,Ver}.  Despite the similarity of 
the bounds,  our approach does not appeal to these works, and is based on some ideas 
and results from additive combinatorics. 

We also note that our argument works for arbitrary sets from other fields, such as the field of complex 
numbers and finite fields. However, some of the underlying tools are somewhat weaker in 
these fields, which in turn leads to weaker, but still nontrivial, results. For example, in  
Remarks~\ref{rem:Fp 3}  and~\ref{rem:Fp n} 
below, we outline the a path to an analogue of Theorem~\ref{thm: det=d}  over prime finite fields. 

As we have mentioned our approach is based on methods from additive 
combinatorics, in particular on some variants of the Szemer\'edi--Trotter theorem
which may be of independent interest.

\section{Some results from additive combinatorics} 

\subsection{Some multidimensional variants of  the Szemer\'edi--Trotter theorem} 
We start with a bound on incidences between planes  in $\R^k$ and points  from a  Cartesian products of sets from $\R$. 
This method has been developed by Solymosi and de Zeeuw~\cite{Sol_deZ} and by Elekes and T\'oth~\cite{ET}
thus we give just an outline of the proof for the sake of completeness.
We note that we apply Lemma~\ref{lem:Incidence dimk}  below only for $k=3$, however we 
believe that a general statement is of independent interest and deserves to be recorded with a 
full proof. Besides, our proof of Lemma~\ref{lem:scalar_M} re-uses a large portion of the argument 
in the proof of Lemma~\ref{lem:Incidence dimk}.

 We also note that following the established terminology in the area we call a
{\it $d$-flat} any hyperplane of dimension $d$.

\begin{lemma}
\label{lem:Incidence dimk} 
Let $\cP= \cA_1\times \ldots \times \cA_k \subseteq \R^k$ for a fixed integer $k \ge 2$,  be the 
Cartesian product of finite sets  $ \cA_1, \ldots, \cA_k  \subseteq \R$ of 
cardinalities $A_i  = \# \cA_i$, $i =1, \ldots, k$, with $A_1 \ge \ldots \ge A_k$, 
and let $\Pi$ be a set of  $(k-1)$-dimensional hyperplanes in $\R^k$. 
Then 
\begin{align*}
\cI(\cP, \Pi) & \ll    \(\# \Pi \# \cP\)^{k/(k+1)}+    A_k^{-k(k-1)}\(\#\cP\)^k\\
& \qquad +\(A_{k-1} A_k\)^{-1}\# \Pi \# \cP 
+  \# \cP (\# \Pi)^{(k-2)/(k-1)}
\,,
\end{align*}
where  $\cI(\mathcal{P}, \Pi)$ is the number of  incidences between the points 
from  $\cP $ and the planes from $\Pi$. 
\end{lemma}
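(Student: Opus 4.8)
The plan is to argue by induction on the dimension $k$, adapting the method of Solymosi--de Zeeuw and of Elekes--T\'oth, which reduces an incidence problem between a grid and hyperplanes in $\R^k$ to a family of planar point--line incidence problems. The base case $k=2$ is exactly the Szemer\'edi--Trotter theorem: for finite $\cP,\Pi\subseteq\R^2$ one has $\cI(\cP,\Pi)\ll(\#\Pi\,\#\cP)^{2/3}+\#\Pi+\#\cP$, and since $\#\cP=A_1A_2\le A_1^2$ this is the asserted bound for $k=2$, the three error terms $\#\Pi$, $\#\cP$, $A_1^2$ matching $(A_{k-1}A_k)^{-1}\#\Pi\,\#\cP$, $\#\cP(\#\Pi)^{(k-2)/(k-1)}$ and $A_k^{-k(k-1)}(\#\cP)^k$ when $k=2$.

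For the inductive step fix $k\ge3$ and assume the lemma in all dimensions $2,\ldots,k-1$. Decompose $\R^k=\R^{k-2}\times\R^2$, writing a point as $(q,\vr)$ with $q\in\R^{k-2}$, $\vr\in\R^2$, so $\cP=\cQ\times\cR$ with $\cQ=\cA_1\times\ldots\times\cA_{k-2}$, $\cR=\cA_{k-1}\times\cA_k$, hence $\#\cR=A_{k-1}A_k$ and $\#\cQ=\#\cP/(A_{k-1}A_k)$ --- placing the two smallest sets in the $\R^2$-factor is what makes the term $(A_{k-1}A_k)^{-1}\#\Pi\,\#\cP$ below come out with the right coefficient. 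Writing each $H\in\Pi$ as $\langle\va_H,q\rangle+\langle\vb_H,\vr\rangle=c_H$ with $(\va_H,\vb_H)\ne\0$, split $\Pi$ into the cylinders $\Pi'=\{H:\vb_H=\0\}$ and the transversal planes $\Pi''=\{H:\vb_H\ne\0\}$. Each $H\in\Pi'$ has $\va_H\ne\0$ and equals $H_0\times\R^2$ for a hyperplane $H_0\subseteq\R^{k-2}$, so $\cI(\cP,H)=\#\cR\cdot\#(\cQ\cap H_0)$ and $\cI(\cP,\Pi')=\#\cR\cdot\cI(\cQ,\Pi_0)$ for a set $\Pi_0$ of hyperplanes in $\R^{k-2}$ with $\#\Pi_0\le\#\Pi$; I would bound $\cI(\cQ,\Pi_0)$ by the lemma in dimension $k-2$ (and directly when $k=3$, since then $\Pi_0$ is a set of points in $\R$), multiply through by $\#\cR$, and check by a routine computation --- using $A_1\ge\ldots\ge A_k$ and $\#\cP=\#\cQ\,\#\cR$ --- that every resulting term is dominated by one of the four terms claimed.

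It remains to treat $\Pi''$. For fixed $q\in\cQ$ the section $H\cap(\{q\}\times\R^2)$ is a line $\ell_{H,q}\subseteq\R^2$, so
\[
\cI(\cP,\Pi'')=\sum_{q\in\cQ}\cI\bigl(\cR,\{\ell_{H,q}\}_{H\in\Pi''}\bigr),
\]
the inner family taken with multiplicity. For each $q$, the planes $H$ with $\#(\ell_{H,q}\cap\cR)\le1$ contribute at most $\#\Pi''$, hence at most $\#\cQ\,\#\Pi''=(A_{k-1}A_k)^{-1}\#\cP\,\#\Pi''$ in all, which is the third term. For the remaining planes one passes to the set $\cL_q$ of \emph{distinct} lines meeting $\cR$ in at least two points, bounds $\cI(\cR,\cL_q)$ by the Szemer\'edi--Trotter theorem with $\#\cL_q\le\#\Pi''$, and sums over $q$; an application of H\"older's inequality, followed by a split into the ranges of $\#\Pi$, $\#\cP$ and the $A_i$, shows that this is absorbed into $(\#\Pi\,\#\cP)^{k/(k+1)}$, into $A_k^{-k(k-1)}(\#\cP)^k$, and into $\#\cP(\#\Pi)^{(k-2)/(k-1)}$.

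The main obstacle is the leftover \emph{excess multiplicity} $\sum_{q}\sum_{\ell}(m_\ell(q)-1)\,\#(\ell\cap\cR)$, summed over lines $\ell$ with at least two points of $\cR$, where $m_\ell(q)=\#\{H\in\Pi'':\ell_{H,q}=\ell\}$. Two transversal planes with proportional $\vb_H$ give the same line at $q$ precisely when $q$ lies on a certain $(k-3)$-flat of $\R^{k-2}$ determined by the pair, so, after grouping $\Pi''$ by the direction of $\vb_H$, the contribution of the \emph{small} direction classes is governed by incidences between $\cQ$ and a family of $(k-3)$-flats and is handled by the induction hypothesis in dimension $k-2$, landing in the third term; while a \emph{large} direction class $D$ (all $\vb_H$ parallel to a fixed $\vb_0\ne\0$) has incidence structure depending only on $q$ and $s:=\langle\vb_0,\vr\rangle$, so that, after bounding the weight $\#\{\vr\in\cR:\langle\vb_0,\vr\rangle=s\}\le A_{k-1}$, its incidences with $\cP$ are controlled by the lemma in dimension $k-1$ applied to the product grid $\cA_1\times\ldots\times\cA_{k-2}\times\cS_0$ with $\cS_0=\{\langle\vb_0,\vr\rangle:\vr\in\cR\}$, together with the trivial bound that no hyperplane contains more than $\#\cP/A_k$ points of $\cP$. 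Fixing the threshold that separates small from large classes and tracking all parameter ranges so that every piece --- in particular the genuinely degenerate term $A_k^{-k(k-1)}(\#\cP)^k$ --- falls under one of the four stated terms is the technical heart of the argument; the geometric ideas are those of Solymosi--de Zeeuw and Elekes--T\'oth.
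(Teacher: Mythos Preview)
Your approach differs substantially from the paper's. The paper does not argue by induction on $k$ and never slices down to planar point--line problems. Instead it carries out a single cell decomposition directly in $\R^k$: one fixes a parameter $r\le A_k$, cuts $\R^k$ into $r^k$ axis-aligned boxes each containing about $\#\cP/r^k$ grid points, and sorts each incidence $(P,\pi)$ according to the trace of $\pi$ on the cell containing $P$. If that trace has at most $k-1$ points one gets $\ll r^{k-1}\#\Pi$; if it has $k$ points in general position (so that they determine $\pi$) one gets $\ll(\#\cP)^k/r^{k(k-1)}$; balancing these two via the choice of $r$ is what produces the main term $(\#\Pi\,\#\cP)^{k/(k+1)}$ together with $A_k^{-k(k-1)}(\#\cP)^k$. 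The remaining incidences --- those for which the cell-trace is rich but spans only a $(k-2)$-flat --- are split once more: either $\cP\cap\pi$ globally lies on at most two $(k-2)$-flats, which the Cartesian structure bounds by $\#\Pi\,\#\cP/(A_{k-1}A_k)$, or $\pi$ is $\tfrac12$-nondegenerate in the sense of Elekes--T\'oth, and the paper simply quotes Lund's point--flat incidence theorem as a black box to get $(\#\Pi\,\#\cP)^{k/(k+1)}+\#\cP(\#\Pi)^{(k-2)/(k-1)}$. So in the paper the exponents $k/(k+1)$ and $(k-2)/(k-1)$ are imported from Lund/Elekes--T\'oth, not manufactured out of two-dimensional Szemer\'edi--Trotter.

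Your sketch has a genuine gap at exactly the place you yourself flag as the ``technical heart''. The distinct-lines part is fine --- the worst-case slice sum $\#\cQ\,(\#\Pi\,\#\cR)^{2/3}$ is indeed absorbed by $\#\cP(\#\Pi)^{(k-2)/(k-1)}+(A_{k-1}A_k)^{-1}\#\Pi\,\#\cP$ --- but the excess-multiplicity analysis is only described, not executed, and I do not see that it closes. For a large direction class $D$ you propose to apply the inductive hypothesis in dimension $k-1$ to the grid $\cA_1\times\ldots\times\cA_{k-2}\times\cS_0$; but $\#\cS_0$ may be as large as $A_{k-1}A_k$, so after reordering the smallest factor of this $(k-1)$-fold product is $A_{k-2}$, and the degenerate term coming out of the induction is $A_{k-2}^{-(k-1)(k-2)}(\#\cQ\cdot\#\cS_0)^{k-1}$; multiplied by the fibre weight and summed over classes this is not visibly bounded by $A_k^{-k(k-1)}(\#\cP)^k$ or by any of the other three targets. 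On the small-class side, the coincidence loci are indexed by pairs within each class, contributing up to $\sum_D\binom{|D|}{2}$ hyperplanes in $\R^{k-2}$; feeding this count into the $(k-2)$-dimensional lemma produces powers of $\#\Pi$ that again do not obviously match the claimed shape. You would need to fix the small/large threshold explicitly and track every resulting term, in every parameter range, through both branches of the induction before this can stand as a proof; the paper sidesteps all of this by using cells in $\R^k$ and appealing to Lund's theorem.
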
 

\begin{proof} 
We follow the idea of \textit{cells}, introduced in the proofs of~\cite[Theorems~2 and~11]{Sol_deZ}
(see also~\cite[Section~2.3]{Zahl}). 
Namely, first  we fix an integer parameter $r$ with 
\begin{equation}\label{eq:cond_r}
1\le r \le A_k\,.
\end{equation}
to be chosen later. 
Then  slice $\R^k$ by $(k-1)$-dimensional hyperplanes aligned with coordinate 
hyperplanes  into $r^k$ open \textit{cells} such that each cell contains a Cartesian products
$  \cB_1\times  \ldots \times \cB_k$, where 
\[
 \cB_i \subseteq \cA_i \mand  \# \cB_i   = \frac{1}{r} \#\cA_i+ O(1), \qquad i =1, \ldots, k\,.
 \]
 
 Observe that any hyperplane  intersects at most $kr^{k-1}$ cells, 
 see  the proofs of~\cite[Theorems~2 and~11]{Sol_deZ}. 
 Indeed, in each  of $k$ orthogonal directions in $\R^k$, 
 we can consider the whole configuration as a union of $r^{k-1}$ "slices" 
 (which are only one cell thick in one direction). 
 When a  $(k-1)$-dimensional hyperplane intersects one cell, it removes all cells in 
 at  least on whole slice (in at least one direction) from further considerations. This gives 
 the above bound on the number of intersected cells.

Further, incidences $ \cI(\cP,\Pi)$ 
can be partitioned 
into the following three classes:
\begin{itemize}
\item $\cI_1$, which contains  $\(P,\,\pi\)  \in \cP \times  \Pi$ with $P \in \pi$ and  such that  $\pi$ has at most $k-1$ points in the cell, containing $P$;
\item $\cI_2$, which contains   $\(P,\,\pi\)   \in \cP \times  \Pi$  with $P \in \pi$ and  $\pi$ is uniquely defined by some  $k$-tuple of  distinct points from the  cell,   containing $P$;
\item  $\cI_3$, which contains   the  rest of $\(P,\,\pi\)  \in \cP \times  \Pi$  with $P \in \pi$. 
\end{itemize}


For each cell $\fC$ the contribution to $\cI_1$ can be estimated as 
\[
 \sum_{\pi \in \Pi} \sum_{\substack{P \in \fC \cap \cP\\ \(P,\,\pi\) \in \cI_1}} 1 
\le (k-1)  \# \Pi \,.
\]
Summing over all $O\(r^{k-1}\)$ cells  
we obtain 
\[
\# \cI_1 \le \sum_{\fC} \sum_{\pi \in \Pi} \sum_{\substack{P \in \fC \cap \cP\\ \(P,\,\pi\) \in \cI_1}} 1 = 
 \sum_{\pi \in \Pi} \sum_{\fC:~\fC \cap \Pi \ne \varnothing}  \sum_{\substack{p \in \fC \cap \cP\\ \(P,\,\pi\) \in \cI_1}} 1 \ll    r^{k-1} \# \Pi \,.
\]

Also, every cell, contains $O\(\(\#\cP\)/r^k\)$ points from $\cP$ 
and hence generates at most  $O\(\(\(\#\cP\)/r^k\)^k\)$ hyperplanes from $\cI_2$. 
Hence, 
\begin{equation}\label{f:I_2}
\# \cI_2
 \ll  \( \frac{\#\cP}{r^k} \)^k r^k =   \frac{\(\#\cP\)^k }{r^{k(k-1)}} \,.
\end{equation}

 Finally, let us 
 calculate the quantity $\#\cI_3$, that is the number of pairs $\{ (P,\pi) ~:~ P\in \cP,\, \pi \in \Pi\} \in \cI_3$ (for $k=2$ one has $\cI_3=0$ and hence below we assume that $k>2$). 
By the definition of $\cI_3$ the intersection $\cP \cap \pi$ belonging to a cell 
 has rank $\ell \le k-1$, in other words, it belongs to a $k-2$-flat. 
First suppose that {\it all} points of the intersection $\cP \cap \pi$ belong at most two $k-2$-flats and denote the number of such incidences as $\overline \cI_3$ and let the rest be $\widetilde\cI_3$.

 Let $\ell$ be the largest integer for which there are $\ell$ linearly 
independent points  $P_i = \(a_{i,1}, \ldots, a_{i,k}\)$, $ i =1, \ldots, \ell-1$, which we represent as columns vectors.
 Then for any other point 
\[
P = \(b_{1}, \ldots, b_{k}\)^t\in \cP\cap \pi\,, 
\]
(where $\vb^t$ denotes the transposition of a vector $\vb$) 
using that $P \in \pi$ we express one component of $P$ via the remaining components. 
After this the condition that the matrix 
$\(P_1 \vert \ldots \vert P_k\vert P\)$ with columns formed by the above points
is of rank $\ell$ gives rise a nontrivial equation on these remaining $k-1$ components. 
Therefore, for any 
such a plane $\pi$  
may contain at most $T = O\(\(\#\cP\)/ \(A_{k-1}A_k\)\)$ points 
from $\cP$. Hence, 
\[
 \#\overline\cI_3
 \ll   \frac{\# \Pi \#\cP}{A_{k-1}A_k} \,.
\]
Also, notice that if our plane contains at most $2T$ points, then the contribution of such planes into $\cI(\cP, \Pi)$  is $O\(\(\#\Pi \#\cP\)/ \(A_{k-1}A_k\)\)$ and hence has the same order as $ \#\overline\cI_3$.

It remains to estimate the value $\widetilde\cI_3$. 
For $k=3$ it has been shown in~\cite{Rud} that the contribution of  $\widetilde\cI_3$ is negligible (under the assumption $\# \Pi \ge \# \cP$). In the case of general $k$ we appeal to~\cite{Lund} which develops the original paper of Elekes and T\'oth~\cite{ET}. 
Recall, that for given $\a \in (0,1)$ a $(k-1)$-flat $\pi$ is $\a$-nondegenerate if at most $\a \#\(\cP\cap \pi\)$ points of $\cP$ lie on any $(k-2)$-flat contained in $\pi$.  
By our construction any plane in  $\widetilde\cI_3$ is $1/2$-nondegenerate and thus by the upper bound for the number of $\a$-nondegenerate incidences, see ~\cite[Theorem~4]{Lund}, one has 
\begin{equation}\label{f:I''_3}
	\#\widetilde\cI_3 \ll   \(\# \Pi \# \cP\)^{k/(k+1)} + \#\Pi + \# \cP (\# \Pi)^{(k-2)/(k-1)} \,.
\end{equation}

Combining the above bounds, we derive
\begin{equation}\label{eq:Prelim}
\begin{split}
  \cI (\cP, \Pi) & 
  =  \# \cI_1 +  \# \cI_2 +  \# \cI_3 \\
  & =    \# \cI_1 +  \# \cI_2 +  \# \overline\cI_3 + \#\widetilde\cI_3\\ 
 &  \ll  r^{k-1} \# \Pi +    \frac{\(\#\cP\)^k }{r^{k(k-1)}} +   \frac{\# \Pi \#\cP}{A_{k-1}A_k} 
 +  \# \cP (\# \Pi)^{(k-2)/(k-1)} 
 \,.
\end{split}
\end{equation}

To optimize these bounds, we choose 
\begin{equation}\label{def:r}
r=\min\left\{\max\left\{ \fl{\(\(\#\cP\)^k/\# \Pi\)^{1/(k^2-1)}}, 1\right\},  A_k\right\}\,, 
\end{equation}
which  satisfies~\eqref{eq:cond_r} and thus is 
an admissible choice.  This leads to the bound
\begin{align*}
  \cI (\cP, \Pi) &   \ll  \(\# \Pi \# \cP\)^{k/(k+1)}  + \# \Pi  +    \frac{\(\#\cP\)^k }{A_k^{k(k-1)}}  \\
  & \qquad \qquad \qquad \qquad \qquad +   \frac{\# \Pi \#\cP}{A_{k-1}A_k} 
  +  \# \cP (\# \Pi)^{(k-2)/(k-1)}  \\
   &   \ll  \(\# \Pi \# \cP\)^{k/(k+1)}  +    \frac{\(\#\cP\)^k }{A_k^{k(k-1)}} +    \frac{\# \Pi \#\cP}{A_{k-1}A_k} 
   +  \# \cP (\# \Pi)^{(k-2)/(k-1)}
\end{align*}
and  completes the proof. 
\end{proof}

Finally, we also need the following result. 

\begin{lemma}
\label{lem:scalar_M}
    Let $k\ge 2$ be an integer and $M$ be a non-singular $k\times k$ matrix over $\R$ and $\omega \neq 0$ be any real number. 
    Also, let $\cB, \cC   \subseteq \R$ 
    be some finite sets 
    or cardinalities $B$ and $C$, respectively. 
    Suppose that $B\le C$. 
    Then the equation 
\[
    \langle M \vb, \vc\rangle = \omega \,, \qquad  \vb\in \cB^k\,, \  \vc \in \cC^k \,,
\]
defined by the inner product,   has  
$O \( \(B^k  C^k\)^{k/(k+1)} + B^{k} 
	+ B^{k-2} C^k \)$ solutions. 
\end{lemma}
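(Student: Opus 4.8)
The plan is to reduce the inner-product equation $\langle M\vb,\vc\rangle=\omega$ to a point--hyperplane incidence count to which Lemma~\ref{lem:Incidence dimk} applies with $k$ replaced by $2k$. Specifically, for each fixed $\vb\in\cB^k$ the vector $\vm(\vb)=M^t$ acting appropriately gives a linear form in $\vc$; more symmetrically, consider the point set
\[
\cP=\cB^k\times\cC^k=\cB\times\cdots\times\cB\times\cC\times\cdots\times\cC\subseteq\R^{2k},
\]
a Cartesian product of $2k$ sets, the first $k$ equal to $\cB$ and the last $k$ equal to $\cC$. The equation $\langle M\vb,\vc\rangle=\omega$ is bilinear in $(\vb,\vc)$, so it is not itself linear on $\R^{2k}$; the standard trick is to fix one of the two blocks. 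I would fix $\vb$: the set of $\vc$ with $\langle M\vb,\vc\rangle=\omega$ is an affine hyperplane $\pi_{\vb}\subseteq\R^k$ (nonempty and genuinely $(k-1)$-dimensional since $M\vb\ne\0$ for $\vb\ne\0$, using that $M$ is non-singular, and the $\vb=\0$ case contributes nothing as $\omega\ne0$). Thus the solution count is $\sum_{\vb\in\cB^k}\#(\cC^k\cap\pi_{\vb})$, i.e. exactly the incidence number $\cI(\cC^k,\Pi)$ where $\Pi=\{\pi_{\vb}:\vb\in\cB^k\}$ is a family of at most $B^k$ hyperplanes in $\R^k$.

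Now apply Lemma~\ref{lem:Incidence dimk} with ambient dimension $k$, point set $\cP'=\cC^k$ (so every $A_i=C$), and plane set $\Pi$ with $\#\Pi\le B^k$. The lemma gives
\[
\cI(\cC^k,\Pi)\ll (B^kC^k)^{k/(k+1)}+C^{-k(k-1)}(C^k)^k+C^{-2}B^kC^k+C^k(B^k)^{(k-2)/(k-1)}.
\]
The second term simplifies to $C^{k^2-k(k-1)}=C^{k}$, the third to $B^kC^{k-2}$, and the fourth to $B^{k(k-2)/(k-1)}C^k$, which is dominated by $B^{k-2}C^k$ since $k(k-2)/(k-1)\le k-2$ fails — wait, $k(k-2)/(k-1)$ versus $k-2$: the ratio is $k/(k-1)>1$, so actually $B^{k(k-2)/(k-1)}\ge B^{k-2}$ when $B\ge1$. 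This is the point that needs care: one must check the fourth term is absorbed. In fact $k(k-2)/(k-1)=k-2+\frac{k-2}{k-1}<k$, and one should compare $B^{k(k-2)/(k-1)}C^k$ against the first term $(B^kC^k)^{k/(k+1)}=B^{k^2/(k+1)}C^{k^2/(k+1)}$; since $B\le C$, the worst case $B=C$ gives $B^{k(k-2)/(k-1)+k}$ versus $B^{2k^2/(k+1)}$, and $k(k-2)/(k-1)+k=\frac{2k^2-2k}{k-1}=2k$ wait that's wrong: $\frac{k(k-2)}{k-1}+k=\frac{k(k-2)+k(k-1)}{k-1}=\frac{k(2k-3)}{k-1}$. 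One checks $\frac{k(2k-3)}{k-1}\le\frac{2k^2}{k+1}$ iff $(2k-3)(k+1)\le 2k(k-1)$ iff $2k^2-k-3\le 2k^2-2k$ iff $k\le3$; so for $k\ge4$ the fourth term genuinely dominates the first and must be kept — but the claimed bound $O((B^kC^k)^{k/(k+1)}+B^k+B^{k-2}C^k)$ lists $B^{k-2}C^k$, not $B^{k(k-2)/(k-1)}C^k$. So I must sharpen: the fourth term of Lemma~\ref{lem:Incidence dimk} is $\#\cP(\#\Pi)^{(k-2)/(k-1)}$ where here $\#\cP=C^k$ and $\#\Pi\le B^k$; but since $\Pi$ is not an arbitrary plane family — the planes $\pi_{\vb}$ are indexed by $\cB^k$ with a special bilinear structure — one expects fewer points on any $(k-2)$-flat. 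The resolution is that a $(k-2)$-flat inside $\pi_{\vb}$ meeting $\cC^k$ in many points forces, by the same rank argument as in the proof of Lemma~\ref{lem:Incidence dimk}, at most $O(C^{k}/C^2)=O(C^{k-2})$ such points per flat and at most $O(C^{k-2})$ per plane overall, but here we are double-counting over $\vb$: actually the cleaner route is that the $\widetilde\cI_3$ contribution, controlled by \eqref{f:I''_3}, already has the form $(B^kC^k)^{k/(k+1)}+B^k+C^k(B^k)^{(k-2)/(k-1)}$, and one then notes the degenerate incidences $\overline\cI_3$ give $B^kC^{k-2}$ — so the honest bound is $(B^kC^k)^{k/(k+1)}+B^k+B^kC^{k-2}+B^{k(k-2)/(k-1)}C^k$.

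The hard part will therefore be squaring the statement of Lemma~\ref{lem:scalar_M} with what Lemma~\ref{lem:Incidence dimk} literally delivers; I expect the intended argument does not go through the generic incidence bound but instead directly adapts its internal machinery — in particular the cell decomposition and the rank/dimension analysis of $\cI_3$ — exploiting that the relevant planes are graphs of a fixed bilinear form, so that a $(k-2)$-flat through one of them which is rich in $\cC^k$-points pins down a codimension-$2$ condition on $\vc$ (hence $O(C^{k-2})$ points, summed to $O(B^kC^{k-2})$ over $\vb$) rather than the weaker generic estimate, and simultaneously the number of distinct planes supporting such a flat is controlled, upgrading $C^k(B^k)^{(k-2)/(k-1)}$ to $B^{k-2}C^k$. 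Concretely the steps are: (i) fix $\vb$, identify $\pi_{\vb}$, reduce to an incidence problem; (ii) run the cell decomposition of Lemma~\ref{lem:Incidence dimk}'s proof with parameter $r\le C$; (iii) bound $\cI_1\ll r^{k-1}B^k$, $\cI_2\ll C^{k^2}/r^{k(k-1)}$; (iv) for $\cI_3$, use that points of $\cC^k\cap\pi_{\vb}$ in a cell that are not in general position lie on a $(k-2)$-flat, and the bilinear structure forces $O(C^{k-2})$ of them, summing to $O(B^kC^{k-2})$; (v) for the genuinely nondegenerate part invoke \cite[Theorem~4]{Lund} exactly as in the lemma; (vi) optimize $r$, recovering $(B^kC^k)^{k/(k+1)}+B^k+B^{k-2}C^k$ after absorbing the remaining term via $B\le C$. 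I expect the main obstacle to be making step (iv) clean enough that the $(B^k)^{(k-2)/(k-1)}$ exponent is genuinely beaten down to $B^{k-2}$ uniformly in $k$, since this is precisely where the special structure of $M$ — as opposed to an arbitrary family of planes — must be used, and it is the only place the non-singularity and the bilinear form (rather than an affine one) enter in an essential way.
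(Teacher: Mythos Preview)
Your approach is essentially the paper's: reduce to a point--hyperplane incidence problem and rerun the cell-decomposition proof of Lemma~\ref{lem:Incidence dimk}, exploiting the Cartesian structure of the plane family to beat the generic $\widetilde\cI_3$ bound. Two points are worth flagging.

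First, you take the point set to be $\cC^k$ and the planes to be $\{\pi_{\vb}:\vb\in\cB^k\}$; the paper does the reverse, with points $\cB^k$ and planes indexed by $\vc\in\cC^k$ via the normal vector $M\vc$ (distinctness following from $\omega\ne0$). With $B\le C$ the paper's choice makes the constraint $r\le A_k=B$ and the $\overline\cI_3$ term $\#\Pi\,\#\cP/B^2=B^{k-2}C^k$ match the stated bound immediately. Your swap can be pushed through as well, but the term-by-term absorption is messier.

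Second --- and this is the real gap --- your step~(v), ``invoke \cite[Theorem~4]{Lund} exactly as in the lemma'', is precisely the step that produces the $\#\cP\,(\#\Pi)^{(k-2)/(k-1)}$ term you correctly identified as too large. The paper does \emph{not} use Lund's theorem here. Instead it bounds $\widetilde\cI_3$ directly from the structure of the plane family: since the planes are (an affine image of) a $k$-fold Cartesian product, at most $C^{\,k-1-d}$ of them contain any fixed $d$-flat with $d<k-1$; combined with the bound $(B/r)^d$ on points of $\cB^k$ in a $d$-flat inside one cell, summing over cells and over $1\le d\le k-2$ gives
\[
\widetilde\cI_3\ \ll\ r^k\sum_{d=1}^{k-2}(B/r)^d\, C^{\,k-1-d}\ \ll\ r^{k-1}BC^{k-2}+r^2B^{k-2}C,
\]
both of which are absorbed (using $B\le C$ and $r\le B$) by the terms already present. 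Your step~(iv) is really the $\overline\cI_3$ estimate; the ``bilinear structure'' you invoke there is not what drives that bound, but it is exactly what supplies the planes-through-a-flat count above, so your closing paragraph has the right instinct --- it just needs to replace step~(v), not supplement it.
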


\begin{proof}  We write our equation  as
\[
 b_1 (M\vc)_1 + \ldots + b_k   (M\vc)_k = \omega \,,
\]
where $b_1, \ldots, b_k \in \cB$,  $\vc = (c_1,  \ldots, c_k)\in \cC^k$ and $(M\vc)_j$ the $j$-th component 
of the vectors $M\vc$.  
    
 We interpret the number of solutions to the above equation as the number of incidences between points $\cB^k$ and hyperplanes, indexed by $\vc\in \cC^k$, which are all distinct since $\omega \ne 0$. 
Applying  Lemma~\ref{lem:Incidence dimk}, we obtain the  desired result, except for  the term $\# \cP (\# \Pi)^{(k-2)/(k-1)}$, 
which comes from the bound on $\# \widetilde\cI_3$, see the estimate~\eqref{f:I''_3}.

To improve this term, we notice that our set of planes has the special form, namely, it is an affine image 
(induced by a non-singular matrix $M$) of $\cC^k$. It means, in particular, that there are at most $C^{k-1-d}$ hyperplanes, containing a flat of dimension $d<k-1$. 
Similarly, the number of points in the intersection of $d$-flat with each cell is at most $(B/r)^d$. 
Finally, notice that  we do not need to 
take into account  the incidences that 
are already included in $\cI_2$ and hence we can suppose that the intersection of any cell 
with the remaining hyperplanes is a flat of dimension $1\le d \le k-2$.

Therefore, we get 
\begin{align*}
\widetilde\cI_3 & \ll r^k \sum_{d=1}^{k-2}  (B/r)^d C^{k-1-d} = r^k C^{k-1} \sum_{d=1}^{k-2} \left( \frac{B}{rC} \right)^d\\
&\ll   r^{k-1} B C^{k-2} + r^2 B^{k-2} C \,.
\end{align*}

Using these facts, and calculations as in~\eqref{f:I_2} instead of~\eqref{eq:Prelim}
we obtain 
\begin{align*}
	 \cI (\cP, \Pi) & \ll  r^{k-1} C^k + \frac{B^{k^2}}{r^{k(k-1)}} \  + B^k + B^{k-2} C^k  \\
	 & \qquad \qquad  \qquad \qquad  \qquad \qquad +  r^{k-1} B C^{k-2} + r^2 B^{k-2} C\,.
\end{align*}
By our assumption $B \le C$ we have $r^{k-1} B C^{k-2}\ll r^{k-1} C^k$. 
Furthermore, since $r \le B$ and $k\ge 3$, we also have 
\[
r^2 B^{k-2} C\le  B^{k} C \le  B^{k-2} C^k\,
\] 

Hence the above bound simplifies as 
\[
 \cI (\cP, \Pi)  \ll  r^{k-1} C^k + \frac{B^{k^2}}{r^{k(k-1)}} \  + B^k + B^{k-2} C^k\,.
\]
Choosing 
the parameter $r$ as in~\eqref{def:r}, that is, 
\[
r=\ \max\left\{ \fl{\(B^{k^2}/C^k\)^{1/(k^2-1)}}, 1\right\}\,, 
\] 
we derive 
\[
	 \cI (\cP, \Pi)  \ll   \(B^k  C^k\)^{k/(k+1)} + B^{k} 
	+ B^{k-2} C^k\,, 
\]
as required. 
\end{proof}

 \begin{remark}
\label{rem:B vs C} 
It is easy to see that the condition $B \le C$ in  Lemma~\ref{lem:scalar_M} can be significantly 
relaxed, but it is quite sufficient for our purpose. 
\end{remark}

\subsection{Some low-dimensional improvements} 

Given a  finite set $\cU  \subseteq \R$, we use   $N(\cU)$ 
to denote the number of solutions to the equation 
\[
v_1(u_1-w_1) = v_2 (u_2-w_2)  
\]
with $u_\nu, v_\nu,w_\nu \in \cU$, $\nu =1,2$.

We recall the following bound given by Rudnev and 
Shkredov~\cite[Theorem~9]{RuSh},  which applies to sets 
of real numbers.

\begin{lemma}\label{lem:NX-R} 
There is an absolute constant $\vartheta >0$, such that 
for any set 
$\cU\subseteq  \R$  of cardinality  
$\# \cU = U$, we have
\[
N(\cU) \ll U^{9/2 - \vartheta}\,. 
\] 
 \end{lemma}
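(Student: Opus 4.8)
\textbf{Proof plan for Lemma~\ref{lem:NX-R}.}

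The quantity $N(\cU)$ counts solutions of $v_1(u_1-w_1) = v_2(u_2-w_2)$, so the natural first step is to reinterpret it multiplicatively. Writing $\lambda = v_1(u_1-w_1) = v_2(u_2-w_2)$ and stratifying over the value of $\lambda$, we see that $N(\cU)$ equals $\sum_{\lambda} r(\lambda)^2$ where $r(\lambda)$ is the number of representations $\lambda = v(u-w)$ with $u,v,w \in \cU$. Equivalently, $N(\cU)$ is the number of quadruples giving equal products from the ``difference-times-element'' set, i.e. a third-moment-type energy of the product set $\cU \cdot (\cU - \cU)$, or, more precisely, a multiplicative-energy-like count for the bilinear form $(v, u-w)$. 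The plan is to recognize this as an instance of the kind of expander/energy estimate controlled by point-plane or point-line incidence bounds in three dimensions over $\R$.

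The main engine will be a Szemer\'edi--Trotter or point-plane incidence bound. One would set up an incidence problem between the point set $\{(u_1, w_1, t) : u_1, w_1 \in \cU\}$ (or a suitable Cartesian-type configuration built from $\cU$ and $\cU - \cU$) and a family of planes or lines indexed by the remaining variables $(u_2, w_2, v_2)$; the equation $v_1(u_1 - w_1) = v_2(u_2 - w_2)$ is linear in $(u_1, w_1)$ once $v_1$ is fixed, or linear in $v_1$ once $u_1 - w_1$ is fixed, which is exactly the structure needed. A careful dyadic decomposition over the popularity of values of $v_1$, of $u_1 - w_1$, or of the product $\lambda$, combined with the relevant incidence estimate (such as the Rudnev point-plane bound over $\R$, or the Solymosi--de Zeeuw / Elekes--T\'oth type bounds underlying Lemma~\ref{lem:Incidence dimk}), should yield a power-saving over the trivial bound $N(\cU) \ll U^5$, which is what one gets by fixing five of the six variables freely. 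The trivial ``Cauchy--Schwarz plus count'' bound gives $N(\cU) \ll U^{9/2}$ (fix $u_1, w_1, u_2, w_2$, then $v_1/v_2$ is determined, contributing $U$; more carefully one gets $U^{4} \cdot U/ \text{something}$, landing at the $9/2$ threshold), so the whole point of the lemma is to extract the extra $U^{-\vartheta}$.

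The hard part — and the reason this is quoted as an external black box rather than proved inline — is obtaining that genuine power saving $\vartheta > 0$ below the $U^{9/2}$ barrier. This requires more than a single clean application of an incidence theorem: one typically has to handle separately the ``structured'' regime (where $\cU$ or $\cU - \cU$ behaves like a generalized arithmetic progression or has small product set, where one argues via sum-product / Plünnecke-type inequalities and the additive structure forces cancellation) and the ``unstructured'' regime (where incidence bounds are genuinely lossy against a Cartesian product and one exploits that $\cU-\cU$ cannot itself be too structured). Balancing these cases, and in particular controlling the degenerate collinear or coplanar contributions that the raw incidence bounds do not suppress, is where the technical work of~\cite{RuSh} lies. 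Since the statement is cited verbatim from~\cite[Theorem~9]{RuSh}, I would simply invoke that result; reproducing the argument would require importing the full machinery of third-moment energy estimates and the point-plane incidence theorem with its degenerate-case analysis, which is beyond what is needed here.
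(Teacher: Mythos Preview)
Your proposal is correct and aligns with the paper: the paper does not prove Lemma~\ref{lem:NX-R} at all but simply records it as a direct citation of~\cite[Theorem~9]{RuSh}, and your proposal, after sketching the shape of the argument, arrives at exactly the same conclusion --- invoke~\cite{RuSh} as a black box. The exploratory discussion you give (energy interpretation, incidence setup, the $U^{9/2}$ threshold as the ``easy'' barrier) is accurate context but goes beyond what the paper itself supplies.
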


 Given a  finite set $\cU  \subseteq \R$, we use   $T(\cU)$ 
to denote the number of solutions to the equation 
\[
v_1u_1 + v_2 u_2 =  x_1y_1+x_2y_2
\]
with $u_\nu, v_\nu,x_\nu, y_u \in \cU$, $\nu =1,2$. 

Our next result can be considerede as a low-dimensional 
improvement of Lemma~\ref{lem:scalar_M} in some 
special cases. 

\begin{lemma}\label{lem:TX-R} 
There is an absolute constant $\vartheta >0$, such that 
for any set 
$\cU\subseteq  \R$  of cardinality  
$\# \cU = U$, we have
\[
T(\cU) \ll U^{13/2 - \vartheta}\,. 
\] 
 \end{lemma}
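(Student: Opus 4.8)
The plan is to deduce the bound from Lemma~\ref{lem:NX-R} by a single application of the Cauchy--Schwarz inequality: I will show that
\[
T(\cU) \le U^2\, N(\cU),
\]
whence $T(\cU) \ll U^{2}\cdot U^{9/2-\vartheta} = U^{13/2-\vartheta}$, the constant $\vartheta$ being inherited (one may take the same $\vartheta$ as in Lemma~\ref{lem:NX-R}). This also matches the remark preceding the statement: in the regime that matters this route is sharper than a direct appeal to Lemma~\ref{lem:scalar_M} with $k=2$, which (applied with $M$ the identity and $\cB=\cC=\cU$) would only give $f(\lambda)\ll U^{8/3}$ for $\lambda\neq 0$ and hence the weaker exponent $20/3>13/2$.

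First I would write $T(\cU)=\sum_{\lambda} f(\lambda)^2$, where $f(\lambda)$ is the number of solutions of $v_1u_1+v_2u_2=\lambda$ with $u_1,u_2,v_1,v_2\in\cU$. Grouping the representations of $\lambda$ according to the value of the pair $(v_1,v_2)\in\cU^2$, we have $f(\lambda)=\sum_{(v_1,v_2)\in\cU^2} n_{v_1,v_2}(\lambda)$, where
\[
n_{v_1,v_2}(\lambda)=\#\{(u_1,u_2)\in\cU^2:\ v_1u_1+v_2u_2=\lambda\}.
\]
Since this is a sum of at most $U^2$ non-negative terms, Cauchy--Schwarz gives $f(\lambda)^2\le U^2\sum_{(v_1,v_2)} n_{v_1,v_2}(\lambda)^2$, and summing over $\lambda$,
\[
T(\cU)\le U^2\sum_{(v_1,v_2)\in\cU^2}\ \sum_{\lambda} n_{v_1,v_2}(\lambda)^2.
\]

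The remaining step is to recognise the right-hand side. For a fixed pair $(v_1,v_2)$, the inner sum $\sum_{\lambda} n_{v_1,v_2}(\lambda)^2$ counts the quadruples $(u_1,u_2,w_1,w_2)\in\cU^4$ with $v_1u_1+v_2u_2=v_1w_1+v_2w_2$, that is, with $v_1(u_1-w_1)=v_2(w_2-u_2)$. Summing over all $(v_1,v_2)\in\cU^2$ and then interchanging the names of $u_2$ and $w_2$ (a bijection of the solution set), we obtain exactly the number of solutions of $v_1(u_1-w_1)=v_2(u_2-w_2)$ with all six variables in $\cU$, i.e.\ $N(\cU)$. This proves the displayed inequality, and Lemma~\ref{lem:NX-R} finishes the argument.

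There is no real obstacle here; the estimate is essentially a formal manipulation. The only point that needs a moment of care is the last identification: one must check that, after summing over $(v_1,v_2)$, the accumulated ``energies'' $\sum_\lambda n_{v_1,v_2}(\lambda)^2$ assemble into the quantity $N(\cU)$ exactly as defined (rather than a close variant), which is just bookkeeping of variables and signs. It is perhaps worth noting that the argument treats all $\lambda$, including $\lambda=0$, on the same footing, so no separate analysis of the ``diagonal'' contribution is required.
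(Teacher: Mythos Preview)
Your proof is correct and follows essentially the same approach as the paper: both arguments write $T(\cU)=\sum_\lambda R(\lambda)^2$, apply Cauchy--Schwarz to pull out a factor of $U^2$ by fixing one pair of variables in each bilinear term, and then identify the remaining sum as exactly $N(\cU)$, so that Lemma~\ref{lem:NX-R} yields $T(\cU)\le U^2 N(\cU)\ll U^{13/2-\vartheta}$. The only cosmetic difference is which pair of variables is fixed in the Cauchy--Schwarz step (you fix $(v_1,v_2)$, the paper fixes $(u_1,u_2)$), which by the symmetry of the equation is immaterial.
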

 
 \begin{proof} Let $R(t)$ be the number of solutions to the equation 
\[
u_1v_1 + u_2 v_2 = t, \qquad u_1,u_2, v_1, v_2  \in \cU\,,  
\]
 where $t$ runs through the set $\cT$ of 
 all $t\in \R$, which admit such a representation. 
 
 Hence, by the Cauchy inequality, 
 \begin{align*}
T(\cU)  & \ll   \sum_{t \in \cT} R(t)^2 =
  \sum_{t \in \cT} \(\sum_{\substack{u_1v_1 + u_2 v_2 = t\\u_1,u_2, v_1, v_2  \in \cU}} 1\)^2\\
   & \le U^2  \sum_{t \in \cT}\, \sum_{u_1,u_2  \in \cU}
    \(\sum_{\substack{u_1v_1 + u_2 v_2 = t\\  v_1, v_2  \in \cU}} 1\)^2\\
 & = U^2  \sum_{u_1,u_2  \in \cU}\,
 \sum_{\substack{u_1v_1 + u_2 v_2 = u_1v_3 + u_2 v_4 \\  v_1, v_2, v_3, v_4  \in \cU}} 1
 = U^2 N(\cU)\,. 
\end{align*}
 Applying Lemma~\ref{lem:NX-R}, we complete the proof. 
\end{proof}

  \section{Proof of Theorem~\ref{thm: det=d}} 
  
\subsection{The case of $n=3$}
\label{subsec:n=3}

Let $N =  D_3(\cX,d)$  for some $d\in \R^*$ with $D_3(\cX) = D_3(\cX,d)$ (clearly $D_n(\cX,d)$ is supported on a finite 
set, hence such $d$ always exists for any $n$).

We need to estimate the number of solutions to the equation 
\[
\det \vX = d\,,
\]
where 
\begin{equation}\label{eq: M 3*3}
\vX = \begin{pmatrix}
x_1 & x_2 & x_3 \\ 
y_1 & y_2& y_3 \\
z_1 & x_2 & z_3
\end{pmatrix}  , \qquad x_i, y_i, z_i \in \cX  \,. 
\end{equation}

We see that $N$ is the number of solutions to the equation
\begin{equation}\label{eq:n=3,d}
    x_1(y_2z_3 - y_3 z_2) - x_2 (y_1 z_3 - y_3z_1) + x_3 (y_1 z_2 - y_2z_1) = d \,, 
\end{equation}
which  we can view as the question on the number of points/planes incidences between the points 
 from $\cP = \cX \times \cX \times \cX$ and the planes from the set $\Pi$ of planes with  projective coordinates
  \begin{equation}\label{eq:planes}
\(\(y_2z_3 - y_3 z_2\) : - \(y_1 z_3 - y_3z_1\):  \(y_1 z_2 - y_2z_1\) : d\)
 \end{equation}
 
 Observe that since $d\ne 0$, the planes with   projective coordinates~\eqref{eq:planes} are distinct  
 if and only if the corresponding  affine points 
\[
 \(\(y_2z_3 - y_3 z_2\),  - \(y_1 z_3 - y_3z_1\),  \(y_1 z_2 - y_2z_1\) \)\in \R^3
\] 
are distinct.
 
Next, let $\mu(a,b,c)$ be the multiplicity of appearance of the projective point $(a:b:c:d)$ in~\eqref{eq:planes}.
 
 For each $w$, we  form the set $\Pi_w$ of the planes which are given by projective coordinates~\eqref{eq:planes}
 with  
\[
w \le \mu\(\(y_2z_3 - y_3 z_2\),  - \(y_1 z_3 - y_3z_1\),  \(y_1 z_2 - y_2z_1\) \) < 2w\,.
\]

So,  by the Dirichlet principle, we can find some $w>0$ and a set $\Pi_w$ such that 
\[
    N   \ll Lw \cI(\mathcal{P}, \Pi_w) \,.
\]  
where $L = \log X$. 

Applying Lemma~\ref{lem:Incidence dimk} with $k=3$ and $\cA_1 = \cA_2 = \cA_3 = \cX$, we obtain 
\begin{align*}
 N  & \ll     w L \(\(\# \Pi_w  X^3\)^{3/4} + X^{-6} \(X^3\)^3  +\# \Pi_w X + X^3  (\# \Pi_w)^{1/2}\)\\
   & =
    LX^{9/4} \(w\#\Pi_w\)^{1/2} \(w^2 \# \Pi_w\)^{1/4}+      wL\# \Pi_w X +
    LX^3 (w^2 \# \Pi_w)^{1/2}  \\
  & \ll     LX^{9/4+3}  \(w^2 \# \Pi_w\)^{1/4}    + LX^7 
\end{align*}
(since we have the trivial bounds $w \# \Pi_w\le X^6$ and $w^2 \# \Pi_w\le X^9$).  

We now observe that 
\[
w^2 \# \Pi_w \le \sum_{\substack{y_i,z_i \in \cX\\i =1,2,3}}
 \mu\(\(y_2z_3 - y_3 z_2\),  - \(y_1 z_3 - y_3z_1\),  \(y_1 z_2 - y_2z_1\) \)^2\,. 
\]
Therefore, 
\begin{equation}\label{eqf:n=3,d}
 N   \ll L X^{9/4+3} \E^{1/4}_*  +  LX^7  = L X^{21/4} \E^{1/4}_* + LX^7  \,,
\end{equation}
where $\E_*$ is the number of the solutions to the system 
\begin{equation}\label{eq:MatrEq}
 \begin{pmatrix}
u_1 & 0 & -u_3 \\ 
0 & u_3 & -u_2 \\
u_2 & -u_1 & 0
\end{pmatrix}
 \begin{pmatrix}
v_3 \\ 
v_2 \\
v_1
\end{pmatrix}
= 
 \begin{pmatrix}
y_1 & 0 & -y_3 \\ 
0 & y_3 & -y_2 \\
y_2 & -y_1 & 0
\end{pmatrix}
 \begin{pmatrix}
z_3 \\ 
z_2 \\
z_1
\end{pmatrix}
\end{equation}
in variables $u_i, v_i, y_i, z_i \in \cX$, $ i=1,2,3$.

We fix $u_2, y_2\in \mathcal{X}$ (in $X^2$ ways). 
We also observe  that by Lemma~\ref{lem:TX-R},  the equation
\[
u_1v_3  -u_3  v_1= y_1 z_3  -y_3 z_1
\]
implied by~\eqref{eq:MatrEq},  has at most $O(X^{13/2-\vartheta})$ solutions, where $\vartheta >0$ is an 
absolute constant. Thus, $\E_* \ll X^{13/2+2-\vartheta} = X^{17/2-\vartheta}$ and therefore~\eqref{eqf:n=3,d} 
implies  
\[
    N \ll L X^{15/2 - 1/8-\vartheta/4} \,, 
\]
which gives the desired result for $ D_3^*(\cX)$.   

Now let $d=0$ and let $N_0 =   D_3(\cX,0)$, which is the only remaining case to investigate.

In this case the equation~\eqref{eq:n=3,d} becomes
\begin{equation}\label{eq:n=3,d=0-}
    x_1(y_2z_3 - y_3 z_2) - x_2 (y_1 z_3 - y_3z_1) + x_3 (y_1 z_2 - y_2z_1) = 0 \,.
\end{equation}
    We can assume that all variables are non--zero because otherwise  we get at most $O(X^7)$ solutions.
Then we fix  $x_3, y_3, z_3 \in \cX$, which is possible in $X^3$ way and for convenience 
we rename them as 
\[
\(x_3, y_3, z_3\) = \(a,b,c\)\, .
\]

First, dividing by $c$,   we rewrite the equation~\eqref{eq:n=3,d=0-} as 
 \[
x_1\(y_2-  bz_2^*\) - x_2 \(y_1 -  bz_1^* \) + a \(y_1 z_2^*-    y_2z_1 ^*\) = 0 \,,
\]
with variables 
\[
x_1,x_2,  y_1, y_2 \in \cX \mand  z_1^*, z_2^* \in c^{-1} \cX\,, 
\]
where, as usual, $c^{-1} \cX = \{c^{-1} x:~x \in \cX\}$. 

Next, we divide the previous equation by $b$,  getting
 \[
x_1\(y_2^*-  z_2^*\) - x_2 \(y_1^* -  z_1^* \) + a \(y_1^* z_2^*-    y_2^*z_1 ^*\) = 0 
\]
with variables 
\[
x_1,x_2 \in  \cX, \qquad  y_1^*, y_2^* \in b^{-1} \cX, \qquad  z_1^*, z_2^* \in c^{-1} \cX\,.
\]

Finally, dividing this equation by $a$ we arrive to 
 \[
x_1^*\(y_2^*-  z_2^*\) - x_2^* \(y_1^* -  z_1^* \) +  y_1^* z_2^*-    y_2^*z_1 ^* = 0  
\]
with variables 
\[
x_1^*,x_2^*  \in a^{-1} \cX, \qquad  y_1^*, y_2^* \in b^{-1} \cX, \qquad  z_1^*, z_2^* \in c^{-1} \cX\,.
\]
Putting all variables in one set 
\[
\cU = \(a^{-1} \cX\) \cup\(b^{-1} \cX\) \cup\(c^{-1} \cX\)\,,
\]
of cardinality $\# \cU \le 3X$,  we arrive to the problem of bounding solutions to the equation 
\begin{equation}\label{eq:n=3,d=0}
u_1\(v_2-  w_2\) - u_2 \(v_1 -  w_1 \) +  v_1 w_2 -    v_2 w_1 = 0 \,, 
\end{equation}
with 
\[
u_1,u_2,  v_1, v_2, w_1, w_2 \in \cU\,.
\]
It is now convenient to rename $( v_1, w_1, w_2)$   as $(a,b,c)$ and 
rename $(u_1, u_2, v_2)$ as $(r,s,t)$.

With these new variables, we consider the  equation~\eqref{eq:n=3,d=0} as a question about the incidence of points
$(r,s,t) \in \cU^3$   and the family  of 
quadratic affine curves  
\[
\cC_{\va}: ~   R(T-c) - S(a-b) +ac -    Tb = 0, \quad  \va = (a,b,c)\in \cU^3 \,,
\]
which have to contain the above point after we specialize
\[(R,S,T) \to (r,s,t)\,.
\] 
We now rewrite the defining equation of the curves $\cC_{a,b,c}$ as
\begin{equation}\label{eq:curve}
\cC_{\va}: ~   RT- \cL_\va(R,S,T) = 0, \quad  \va = (a,b,c)\in \cU^3 \,,
\end{equation}
where $\cL_\va(R,S,T)  = cR  + S(a-b) + bT - ac \in \C[R,S,T]$. 

In order to apply~\cite[Theorem~1.2]{ Zahl}, we have to check that any three distinct curves
from the family of curves~\eqref{eq:curve}, intersect in   $O(1)$ points, 
and also that  any three points determine $O(1)$ curves. 

We first observe that distinct triples $\va$ correspond to distinct curves $\cC_{\va}$.

Next, we note that any two distinct linear polynomials $\cL_{\va_1}(R,S,T)$ and 
 $\cL_{\va_2}(R,S,T)$  of the above form are linearly independent. This in turn implies that 
 for any three pairwise distinct linear polynomials $\cL_{\va_i}(R,S,T)$, $i=1,2,3$,   at least two out the 
 following three differences
 \[
 \cM_{1,2} = \cL_{\va_1} - \cL_{\va_2} , \quad  \cM_{2,3} = \cL_{\va_2} - \cL_{\va_3} , 
 \quad   \cM_{3,1} = \cL_{\va_3} - \cL_{\va_1}\,, 
 \]
 are linearly independent. Assuming that, say,  $ \cM_{1,2}$ and $\cM_{2,3}$
 are linearly independent,    from the definition of the curves 
 $\cC_{\va}$ in~\eqref{eq:curve}, that we have a system of equation 
\[
 \cM_{1,2}(r,s,t) = \cM_{2,3} (r,s,t) = 0\,.
 \]
 of rank two. Hence we can express two of three variables via the third one and then 
 substitute in the equation $rt -  \cL_{\va_1} (r,s,t) =0$,  which becomes a nontrivial quadratic 
 equation in this third variable. This shows that any three distinct curves~\eqref{eq:curve} 
 have at most  two  common points. 
 
 An essentially identical argument shows that any three distinct point can simultaneously  
belong to at most two curves~\eqref{eq:curve}. 

We are now able to apply~\cite[Theorem~1.2]{Zahl} (with $k=3$, $m = n= U^3$),
getting that the number of the above incidences is  
\[
O\(\(U^3\)^{6/8} \(U^3\)^{6/8} + U^3\) = O(U^{9/2}) =  O(X^{9/2}) 
\]
and hence, recalling that the number of choices for $x_3, y_3, z_3 \in \cX$ is  $X^3$,
we obtain  $N_0\ll X^3 X^{9/2} = X^{15/2}$, which gives the desired bound on $D_3(\cX)$.

\begin{remark}
\label{rem:Fp 3} 
Now, let   $D_{n,p}(\cX,d)$ be an analogue  of $D_n(\cX,d)$ defined for a 
set $\cX$ from the field $\F_p$ of $p$ elements, where $p$ is 
a sufficiently large prime. 
Using  a result of Rudnev~\cite[Theorem~3]{Rud} and our calculations from the beginning 
of the proof of Theorem~\ref{thm: det=d},  we see that 
for $d \in \F_p^*$ we have
\begin{align*}
D_{3,p}(\cX,d) & \ll wL \( \frac{\# \Pi_w X^3}{p}+ \# \Pi_w X^{3/2} + X \# \Pi_w + X^{9/2} \)\\
&  \ll \frac{L X^9}{p} + L X^{8-1/2}\, ,
\end{align*}
where, as before,  $L = \log X$.
Thus for $X\le p^{2/3}$ we have the saving  of $X^{1/2}$ compared to the  
ananlogue of the trivial bound~\eqref{eq: Det Triv}, while for $X > p^{2/3}$ the first term dominates 
and it is certainly asymptotically optimal. Furthermore, if $d=0$, then one can fix two more variables in~\eqref{eq:curve}, 
for example, $T$ and $a$, and then apply the points/lines incidences of Stevens and de Zeeuw~\cite[Lemma~5]{SZ} 
to get a slightly weaker bound $D_{3,p}(\cX,0)  \ll X^9/p + X^{8-1/4}$. 
\end{remark}

\subsection{The case of $n \ge 4$: First bound}
\label{sec: large n -1} 
We consider  a $3\times 3$ minor as in~\eqref{eq: M 3*3}  and denote it by $\vY$, keeping the 
name $\vX$ for the whole  $n\times n$ matrix as in~\eqref{eq: Matr X}. 

Further, we consider the complementary $(n-3) \times (n-3)$ minor $\vY^c$ using the remaining columns and rows. 
It is easy to see that the contribution to $D_n(\cX,d)$ from matrices $\vX$ with $\det \vY^c = 0$ is $O\(X^{n-2}\)$
as in this case we get two independent equations). Indeed, it is obvious for $d\neq 0$ but for $d=0$ the polynomial 
$\det \vX \in \R[\vX]$ cannot be divisible by the polynomial $\det \vY^c  = 0$
 (consider any $n$ linearly independent vectors with fixed components from 
 the matrix $\vY^c $,
 the choice of such 
 vectors  is possible for $n \ge 4$).

Hence we now assume that   the remaining $n^2-9$ elements outside of $\vY$  in such a way that $\det \vY^c \neq 0$. 

Applying Gauss elimination we can make $\vY^c$ equals the identity matrix $I_{n-3}$, 
this changes the value of $d$ in the equation~\eqref{eq: Matr X}  to a non-zero multiple, and hence if $d \ne 0$ 
this new value is also non-zero. 

After this, via elementary row and column transformations we can make all remaining 
$n^2 - (n-3)^2 - 9 = 6(n-3)$ elements  of $\vX$ (that is, excluding elements of $\vY$ and $\vY^c$) equal zero
(and also leading to an equation of the type~\eqref{eq: basic det eq} with some different, but still non-zero, value 
of $d$. 
Observe that this also induces some additive shifts $\xi_{i,j}$, $1 \le i,j\le 3$ of the elements of $\vY$. 
Now applying our bounds for the case $n = 3$ with the new set 
\[
\widetilde \cX = \bigcup_{1 \le i,j\le 3} \( \cX + \xi_{i,j}\)\,.
\]
we conclude that $D_n^*(\cX) \ll X^{n^2-9}  D_3^*(\widetilde \cX ) + X^{n^2-2} \ll X^{n^2-13/8-\eta}$. 

\begin{remark}
The above argument also works for $d=0$ and leads to the inequality 
 $D_n(\cX) \ll X^{n^2-9}  D_3(\widetilde \cX ) + X^{n^2-2} \ll X^{n^2-3/2} $, 
 which however is always weaker than our next bound in Section~\ref{sec: large n -2}. 
\end{remark} 

\begin{remark}
\label{rem:Fp n} 
This argument, combined with the bounds  on $D_{3,p}(\cX,d)$ indicated  Remark~\ref{rem:Fp 3}  can also be used to estimate 
$D_{n,p}(\cX,d)$ for any $n \ge 3$. 
\end{remark}   
 
\subsection{The case of $n \ge 4$: Second bound}
\label{sec: large n -2}

As before, let $N =  D_n(\cX,d)$  for some $d\in \R^*$ with $D_n(\cX) = D_n(\cX,d)$.

We represent $\vX \in \cD_n(\cX,d)$ in the form
  \begin{equation}
  \label{eq:XYxy}
\vX=\begin{pmatrix}  \vY & \vy^t\\ \vz &  x \end{pmatrix}, \qquad x \in \cX, \  \vy,  \vz \in \cX^{n-1}, \
\vY \in \cX^{(n-1) \times (n-1)}\,.
  \end{equation}

The contribution to $N$ from matrices~\eqref{eq:XYxy} with $x = 0$ is clearly $O(X^{n^2-2})$.
Hence, 
  \begin{equation}
  \label{eq:NNN}
N = N_0 + N_*  + O(X^{n^2-2})\,, 
  \end{equation}
  where $N_0$ is the contribution from from matrices~\eqref{eq:XYxy} with   $\det \vY = 0$ 
  and $N_*$ is the contribution from other matrices~\eqref{eq:XYxy}. 
  
To estimate $N_0$, we note that there are most $O\(X^{(n-1)n -2}\)$ choices for  the 
 $(n-1)\times n$-matrix $\(\vY|  \vy^t\)$ for which it is or rank at most $n-2$. 
 Otherwise we have two equations $\det \vY =0$ and then when $\vY$ and $\vy$ are fixed
 in at most $O\(X^{(n-1)^2 - 1 + n -1}\) = O\(X^{(n-1)n-1}\)$ ways and with  $\(\vY|  \vy^t\)$
 of rank $n-1$, we obtain a nontrivial linear equation for $\vz$. 
 Hence 
 \begin{equation}
  \label{eq:N0}
 N_0  \ll X^{(n-1)n -1 + n-1} = X^{n^2-2}\,.
 \end{equation}

To estimate $N_*$, we  fix one of at most distinct $X^{(n-1)^2}$ matrices 
$\vY$ with  $\det \vY \neq 0$. Then, by the  {\it Schur determinant identity\/}, noticing that $\vz  \vY^{-1} \vy^t$ is a scalar, we have 
\[
\det \vX = \det \vY \cdot \det\(x -  \vz  \vY^{-1} \vy^t\)
=  x \det \vY - \vz \Adj(\vY) \vy^t\,, 
\]
where $ \Adj(\vY) $ is the adjoint matrix of $\vY$, that is, 
\[
\vY  \Adj(\vY) = \det \vY \cdot I_{n-1}\,.
\] 

 We notice that there is a unique choice of $x$ with 
$x \det \vY= d$, leading to the nontrivial equation $\vz \Adj(\vY) \vy^t = 0$ 
which can be satisfied by at most $X^{2(n-1)-1} = X^{2n-3}$ vectors $  \vy,  \vz \in \cX^{n-1}$. 
Thus the 
total number of such matrices is at most $X^{(n-1)^2-(2n-3)} = X^{n^2-2}$. 

Next, for other choices of $x$ and $\vY$ with  $\det \vY \neq 0$, since  
$\Adj(\vY)$ is non-singular and we can apply Lemma~\ref{lem:scalar_M}
with $M=\Adj(\vY)$, $k=n-1$, $B = X$, $C = X^{n-1}$ and $\omega = d- x \det \vY \ne 0$. Hence 
we have 
\[
O\( \(X^{n-1} \cdot  X^{n-1} \)^{(n-1)/n} + 
X^{n-1}
+ X^{n-3} \cdot X^{n-1} \) 
= O\(X^{2n -4 +2 /n}\)\,
\]
choices for  $(\vy, \vz) \in \cX^{n-1} \times \cX^{n-1}$. Therefore, 
 \begin{equation}
  \label{eq:N*}
 N_*  \ll X^{(n-1)^2 + 1 + 2n -4 +2 /n} = X^{n^2-2+2 /n}\,.
 \end{equation}
Substituting~\eqref{eq:N0} and~\eqref{eq:N*} in~\eqref{eq:NNN}
implies that 
\[D_n(\cX) = N \ll X^{n^2-2+2 /n}
\]
and thus completes the proof.

 \section{Comments} 
 
 \subsection{Conjectured estimate}

In case $\cX = \{1, \ldots, X\}$ the results of~\cite{DRS} and~\cite{Kat} show that for this set
\[
D_n(\cX)  \ll  X^{n^2-n}\log X \mand 
D_n^*(\cX)  \ll     X^{n^2 - n}\,.
\] 
However, for sparse sets one can get a lower  bound, which shows that  the 
naive guess that $D_n(\cX) \le  X^{n^2-n+o(1)}$ for any set $\cX \in \R$ is false. 
 For example, if $\cX = \{\gamma^x:~ x =1, \ldots, X\}$
for some $\gamma \ne 0, \pm 1$, then it is easy to see that  $D_n (\cX, 0) \gg X^{n^2-n+1}$. 
In particular, partially motivated by work of Arutyunyan~\cite{Arut} on the number of distinct determinants, we 
suggest the following. 

\begin{conj}\label{conj: log saving}
For   an arbitrary finite set  $\cX\subseteq \R$ be of cardinality   $\# \cX = X$,   we have 
\[
D_n(\cX)  \ll  X^{n^2- c \log n}  
\]  
for some absolute constant $c > 0$. 
\end{conj}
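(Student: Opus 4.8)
\textbf{Proof proposal for Conjecture~\ref{conj: log saving}.}

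The plan is to prove Conjecture~\ref{conj: log saving} by iterating the block decomposition of Section~\ref{sec: large n -2} to depth of order $\log n$, arranging that each level of the recursion contributes a \emph{fixed} power saving, so that the total saving grows like $\log n$. Write $\vX$ in $2\times 2$ block form $\vX = \begin{pmatrix} \mathbf{A} & \mathbf{B} \\ \mathbf{C} & \mathbf{D}\end{pmatrix}$ with $\mathbf{A}$ an $m\times m$ block, $m = \lceil n/2 \rceil$. The contribution of matrices with $\det \mathbf{A} = 0$ is of lower order, exactly as in~\eqref{eq:N0}; for the rest the Schur identity gives $\det \vX = \det \mathbf{A} \cdot \det(\mathbf{D} - \mathbf{C}\mathbf{A}^{-1}\mathbf{B})$, so for each admissible value of $\det\mathbf{A}$ one must count the $(n-m)\times(n-m)$ matrices $\mathbf{D} - \mathbf{C}\mathbf{A}^{-1}\mathbf{B}$, with $\mathbf{D}$ ranging over $\cX^{(n-m)\times(n-m)}$, of a prescribed determinant. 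For fixed $\mathbf{A},\mathbf{B},\mathbf{C}$ the entries of $\mathbf{C}\mathbf{A}^{-1}\mathbf{B}$ are fixed real shifts, so the number of such $\mathbf{D}$ is at most $D_{n-m}(\cY)$ for a set $\cY \subseteq \R$ with $\# \cY = O_n(X)$ (a bounded union of translates of $\cX$); since Lemmas~\ref{lem:Incidence dimk} and~\ref{lem:scalar_M} are uniform over arbitrary sets, the enlargement of $\cX$ to $\cY$ at each level is harmless. Feeding this into an induction on $n$ already reproduces a constant saving, but --- as it stands --- not a growing one, because the outer variables $\mathbf{A},\mathbf{B},\mathbf{C}$ are summed over trivially.

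To make the saving accumulate, one must extract an \emph{additional} fixed saving from those outer variables at every level. The plan for this is to expand $\det(\mathbf{D} - \mathbf{C}\mathbf{A}^{-1}\mathbf{B})$ along the first row of $\mathbf{D}$: the resulting equation asks that the point $(\mathbf{D}_{1,1},\ldots,\mathbf{D}_{1,n-m}) \in \cX^{n-m}$ lie on an affine hyperplane determined by $\det\mathbf{A}$, $d$, and the cofactors of $\mathbf{D} - \mathbf{C}\mathbf{A}^{-1}\mathbf{B}$ along its first row (the normal vector being the vector of those cofactors, the offset an explicit combination of them with $d/\det\mathbf{A}$). Applying the multidimensional Szemer\'edi--Trotter bound of Lemma~\ref{lem:Incidence dimk} (with $k = n-m$, $\cP = \cX^{n-m}$), together with a dyadic decomposition to account for hyperplanes of high multiplicity as in the treatment of the case $n = 3$, one hopes to beat the trivial count for this incidence problem. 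The dominant error term in Lemma~\ref{lem:Incidence dimk}, the term $(A_{k-1}A_k)^{-1}\# \Pi\, \# \cP$, comes from hyperplanes that meet $\cP$ in a lower-dimensional flat, and controlling the number of such degenerate hyperplanes among our cofactor hyperplanes is precisely the quantitative input one has to supply.

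The hard part --- and the reason this remains a conjecture --- is exactly this last structural step: one must show that, for all but a negligible proportion of the outer data, the relevant cofactor vectors do not concentrate on low-dimensional subvarieties. This is an instance of the multiple sum--product phenomenon, and, as noted after~\cite[Corollary~3]{Arut}, the best results available there yield only logarithmic gains; it is this logarithmic barrier that both suggests the shape $X^{n^2 - c\log n}$ of the conjectured bound and blocks a proof by the present methods. A complete argument would need to couple the recursion above with a quantitative, uniform-in-$\cX$ version of Arutyunyan's distinct-determinants estimate, run it to depth of order $\log n$, and check that the constant per-level saving genuinely survives the accumulation across levels --- the last point being a delicate piece of bookkeeping, since the auxiliary sets grow (by bounded factors) and the incidence bounds must be re-applied to them at every step.
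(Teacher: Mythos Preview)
The statement you are addressing is a \emph{conjecture}, not a theorem: the paper offers no proof of it, only a brief motivation (the lower bound $D_n(\cX,0)\gg X^{n^2-n+1}$ for geometric progressions and the Arutyunyan distinct-determinants result). There is therefore no ``paper's own proof'' against which to compare your attempt.

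Your proposal is not a proof either, and to your credit you say so explicitly. The recursion you describe --- halving $n$ via a Schur complement and hoping to extract a fixed power saving at each of $O(\log n)$ levels --- is a natural plan, but the gap you identify is genuine and, as far as is currently known, fatal. The crux is that after one Schur step the inner determinant equation involves $\mathbf{D}-\mathbf{C}\mathbf{A}^{-1}\mathbf{B}$, whose entries are no longer drawn from a single set $\cY$ of size $O_n(X)$ once you also vary $\mathbf{A},\mathbf{B},\mathbf{C}$: the shifts $\mathbf{C}\mathbf{A}^{-1}\mathbf{B}$ depend on all of those outer variables, so the ``bounded union of translates'' claim is false unless $\mathbf{A},\mathbf{B},\mathbf{C}$ are frozen, in which case the outer count is trivial and the saving does not iterate. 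Your alternative --- pushing the outer variables into the incidence problem via cofactor hyperplanes --- runs, as you note, straight into the sum--product logarithmic barrier that motivates the conjectured exponent in the first place. So the proposal is an honest sketch of why the conjecture has the shape it does, but it is not a proof, and no proof is known.
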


\subsection{Further questions} 
It is certainly interesting  to investigate the number $R_{m,n,r}(\cX)$ of $m \times n$-matrices with 
elements from $\cX$ and of rank $r \le m \le n$. It is easy to see the bound
\[
R_{m,n,r}(\cX) \ll X^{mr + (n-r) r} 
\]
(which in fact at least for $m =n$ and  for $r = n-1$ and $r = n-2$ has been implicitly used in the above).

We are also interested in good upper bounds on cardinality of the set 
\[
\#\{\vX\in \cD_n(\cX,d):~\vA + \vX \in \cD_n(\cX,e)\}
\]
for a fixed $n\times n$ matrix $\vA$ and fixed real numbers  $d$ and $e$. 
Having good bounds on this quantity may help to strengthen the argument 
of Section~\ref{sec: large n -1} (some results in this direction 
can be found in~\cite{DKP}). 

The above questions are also interesting for symmetric, orthogonal, symplectic and other special 
matrices. 

Finally, as we have mentioned, matrices over finite fields and  over $\C$ can also be studied via appropriate 
modifications of our arguments, see, for example, Remarks~\ref{rem:Fp 3} and~\ref{rem:Fp n}.

\section*{Acknowledgement}

The  authors would like to  thank  Giorgis Petridis and 
Misha Rudnev 
for useful discussions. 

During the preparation of this work,   the second author was  supported 
by the Australian Research Council Grants  DP230100530 and DP230100534.

\end{document}